\newtheorem{theorem}{Theorem}
\newtheorem{proposition}[theorem]{Proposition}
\def\B{\Bbb B}
\def\C{\Bbb C}
\def\D{\Bbb D}
\def\vp{\varphi}
\def\dist{\mbox{dist}}
\def\ds{\displaystyle}
\title[Boundary behavior of the squeezing functions]
{Boundary behavior of the squeezing functions of $\C$-convex domains
and plane domains}
\author{Nikolai Nikolov and Lyubomir Andreev}
\address{N. Nikolov: Institute of Mathematics and Informatics\\Bulgarian Academy
of Sciences\\Acad. G. Bonchev 8, 1113 Sofia, Bulgaria\newline
\indent Faculty of Information Sciences\\
State University of Library Studies and Information Technologies\\
Shipchenski prohod 69A, 1574 Sofia, Bulgaria}\email{nik@math.bas.bg}
\address{L. Andreev: Institute of Mathematics and Informatics\\Bulgarian Academy
of Sciences\\Acad. G. Bonchev 8, 1113 Sofia, Bulgaria}
\email{lyubomir.andreev@math.bas.bg}
\subjclass[2010]{32F45}
\keywords{squeezing function, $\C$-convex domain, Dini-smooth domain}
\thanks{The authors were partially supported by the Career Development Program
for Young Scientists at the Bulgarian Academy of Sciences, 2016--2017.}
\begin{document}

\begin{abstract} It is shown that any non-degenerate $\C$-convex domain
in $\C^n$ is uniformly squeezing. It is also found the precise behavior
of the squeezing function near a Dini-smooth boundary point of a plane domain.
\end{abstract}

\maketitle

Denote by $\B_n$ the unit ball in $\C^n.$ Let $M$ be an $n$-dimensional complex manifold,
and $p\in M.$ For any holomorphic embedding $f:M\to\B_n$ with $f(p)=0,$ set
$$s_M(f,p)=\sup\{r>0:r\B_n\subset f(M)\}.$$
The squeezing function of $M$ is defined by $\ds s_M(p)=\sup_f s_M(f,p)$
if such $f$'s exist, and $\ds s_M(p)=0$ otherwise.

If $\ds\inf_M s_M>0,$ then $M$ is said to be uniformly squeezing.

Many properties and applications of the squeezing function and the uniformly squeezing
manifolds have been explored by various authors, see e.g.~\cite{DGZ1,DGZ2,DF,DFW,FW,JK,KZ}.

By \cite[Theorem 2.1]{KZ}, any convex bounded domain in $\C^n$ is uniformly
squeezing. Our first aim is to extend this result to a larger class of domains.

A domain $D$ in $\C^n$ is called $\C$-convex if any non-empty intersection of $D$
with a complex line is a simply connected domain. Then $\C^n\setminus D$
is a union of hyperplanes (see e.g.~\cite[Theorem 2.3.9]{APS}). This easily implies that
if $D$ is degenerate, i.e.~containing complex lines, then $D$ is linearly equivalent to
$\C\times D',$ and hence $s_D=0.$

On the other hand, we have the following.

\begin{theorem} There exists a constant $c_n>0$ such that $s_D\ge c_n$ for any
non-degenerate $\C$-convex domain $D$ in $\C^n.$
\end{theorem}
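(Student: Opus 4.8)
The plan is to use the biholomorphic invariance of the squeezing function to normalize $p=0$, and then to exhibit a single holomorphic embedding $f\colon D\to\B_n$ with $f(0)=0$ whose image contains a ball $r\B_n$ with $r$ depending only on $n$. Complex affine maps preserve $\C$-convexity and leave $s_D$ unchanged, so a linear normalization is free. Non-degeneracy will be used in two ways: it makes $D$ Kobayashi hyperbolic, so that competitors $f$ exist at all, and, through the description $\C^n\setminus D=\bigcup_\alpha H_\alpha$ as a union of complex hyperplanes, it forces the normals of the $H_\alpha$ to span $\C^n$ (a common direction lying in every $H_\alpha$ would display $D$ as $\C\times D'$).

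The quantitative engine I would rely on is the two-sided comparison of invariant metrics available for $\C$-convex domains with universal constants. For a direction $v$ write $d_D(0;v)$ for the Euclidean distance from $0$ to $\partial D$ along the line $\C v$. Simple connectedness of the slice $D\cap\C v$ (which is exactly $\C$-convexity) together with Koebe's theorem bounds the hyperbolic metric of the slice, giving $\kappa_D(0;v)\le |v|/d_D(0;v)$; linear convexity produces a supporting hyperplane at the nearest boundary point, whose defining affine functional, composed with a Cayley map, yields $c_D(0;v)\ge |v|/(4\,d_D(0;v))$. Thus $c_D(0;\cdot)$ and $\kappa_D(0;\cdot)$ are both comparable to $|v|/d_D(0;v)$ with constants independent of $D$. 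By itself this only bounds $s_D(0)$ from above, since $s_D(0)\le\inf_v c_D(0;v)/\kappa_D(0;v)$, so the real content of the theorem is the construction of a good map.

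For the embedding I would choose $n$ hyperplanes $H_{\alpha_1},\dots,H_{\alpha_n}$ whose normals form a basis and take the corresponding Carath\'eodory-type functions $f_1,\dots,f_n\colon D\to\D$ furnished above, normalized so that $f_j(0)=0$. Assembling them gives a holomorphic map $F=(f_1,\dots,f_n)\colon D\to\D^n$ with $F(0)=0$. The metric comparison forces $dF_0$ to be an isomorphism with norm and inverse controlled only by $n$, so $F$ neither collapses a direction nor is critical at $0$. Composing with the universal comparisons $\B_n\subset\D^n\subset\sqrt n\,\B_n$, rescaling into $\B_n$, and applying a M\"obius automorphism of $\B_n$ that recenters $F(0)$ to the origin then produces a candidate $f$; it remains to control its injectivity and the size of the inscribed ball of $f(D)$.

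The main obstacle is global: the local estimate at $0$ guarantees neither that $F$ is injective on all of $D$ nor that $F(D)$ really contains a ball of universal radius, and, unlike the convex case, there is no John ellipsoid to trap $D$ between comparable balls. I would settle this by a compactness argument run by contradiction. If no uniform $c_n$ existed, I could pass to normalized $\C$-convex domains $D_k$ with $\B_n$ as inscribed ball and $s_{D_k}(0)\to 0$, and extract a kernel limit $D_\infty$. The crucial technical points are that $\C$-convexity and the inclusion $\B_n\subset D_k$ survive the limit and that the extremal functions converge; granting these, $D_\infty$ is a $\C$-convex domain with $s_{D_\infty}(0)=0$, which by the degenerate/non-degenerate dichotomy recorded before the theorem can occur only if $D_\infty$ contains a complex line. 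Showing that such a line is incompatible with $\B_n\subset D_\infty$ and with the spanning normals yields the contradiction. Verifying the stability of $\C$-convexity and of the extremals under this limit is, in my view, the technical heart of the proof.
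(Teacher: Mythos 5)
Your argument has a genuine gap, and it sits exactly where you place the ``technical heart.'' The fallback compactness argument does not produce a contradiction: with only the normalization $\B_n\subset D_k$, a kernel limit $D_\infty$ of non-degenerate $\C$-convex domains can perfectly well contain a complex line \emph{and} contain $\B_n$ (think of domains drifting toward something affinely equivalent to $\C\times D'$, which contains balls of every radius). So the concluding step ``such a line is incompatible with $\B_n\subset D_\infty$'' is false, and the ``spanning normals'' property is not quantitatively uniform along the sequence, hence need not survive the limit. Non-degeneracy is an open, not closed, condition under this normalization, and --- as you yourself observe --- there is no John-ellipsoid-type outer trapping for $\C$-convex domains to force $D_\infty$ to be bounded. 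Making the compactness route work would require a normalization that uniformly bounds each $D_k$ in $n$ independent complex directions; once you have that, compactness is no longer needed.

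That stronger normalization is precisely the paper's key step, and it also resolves the injectivity obstacle at which you abandon your constructive attempt. Using \cite[Lemma 15]{NPZ}, one finds an orthogonal map $T$ and a unipotent lower-triangular $A$ so that, after replacing $D$ by $A(T(D))$, the domain contains a polydisc of universal radius and each coordinate projection $D_j$ (which is again $\C$-convex, hence simply connected, by \cite[Theorem 2.3.6]{APS}) omits the value $1$, so it is a \emph{proper} simply connected plane domain. Taking $\vp_j$ to be the Riemann map of $D_j$ fixing $0$ and setting $\vp=(\vp_1,\dots,\vp_n)$, one gets a map into $\D^n$ that is \emph{automatically injective} because it is a product of injective one-variable maps; the Koebe quarter theorem (applied twice, using $\dist(0,\partial D_j)\le 1$ and the inner polydisc) then yields $d_n\D^n\subset\vp(D)\subset\D^n$ with an explicit dimensional constant. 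Your map $F$ built from hyperplane functionals composed with Cayley maps has no such product structure, which is why its injectivity and the size of the inscribed ball of $F(D)$ cannot be controlled; replacing the hyperplane functionals by Riemann maps of the coordinate projections is the missing idea.
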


\begin{proof} We shall use the idea of the proof of \cite[Theorem 1.1]{KZ}.

Let $p\in D.$ We may suppose that $p=0.$ It follows by \cite[Lemma 15]{NPZ} and the proof of
\cite[Theorem 13]{NPZ} that there exist an orthogonal map $T:\C^n\to\C^n$ and a
lower triangular $n\times n$ matrix $A$ with 1's on the main diagonal such that
$G=T(D)$ contains the 1-norm unit ball $E_n$ in $\C^n$ and all the coordinates of $Az$ are
different from 1 for any point $z\in G.$ It is easy to see that $||A||_{\max}\le 1$ and then
$||A^{-1}||_{\max}\le(n-1)!$. Denoting by $\D$ the unit disc,
it follows that the image $F_n=A(E_n)$ of $E_n$ under the linear map given by $A$ contains
$16d_n\D^n,$ where $d_n>0$ depends only on $n.$

Since the squeezing function is invariant under biholomorphisms, we may replace $D$ by $A(G).$ Note
that the orthogonal projection $D_j$ of $D$ onto the $j$-th coordinate complex line is a simply connected
domain (see e.g.~\cite[Theorem 2.3.6]{APS}). Let $\vp_j$ be a conformal map from $D_j$ onto
$\D$ with $\vp_j(0)=0.$ Since $\dist(0,\partial D_j)\le 1,$ then $4|\vp_j'(0)|\ge1$
by the K\"obe quarter theorem. Then the same theorem implies that $d_n\D\subset\vp_j(16d_n\D).$
So, for $\vp=(\vp_1,\dots,\vp_n)$ one has that
$$d_n\D^n\subset\vp(16d_n\D^n)\subset\vp(F_n)\subset\
\vp(D)\subset\varphi(\Pi_{j=1}^n D_j)\subset\D^n.$$
Hence $d_n\B_n\subset\vp(D)\subset\sqrt n\B_n$ which implies the desired result
with $c_n=d_n/\sqrt n.$
\end{proof}

As an application of Theorem 1, we shall prove briefly one of the main results in \cite{NPZ}
(whose original proof is close to that of \cite[Theorem 1.1]{KZ}). Denote by $\gamma_D,$ $\kappa_D$
and $\beta_D$ the Carath\'eodory, the Kobayashi and the Bergman metrics of $D.$ It is well-known
that $\gamma_D\le\kappa_D$ and $\gamma_D\le\beta_D$ (if $\beta_D$ is well-defined).
Moreover, if $D$ is a convex, resp.~$\C$-convex, domain, then
$\gamma_D=\kappa_D$ by the Lempert theorem, resp.~$\kappa_D\le4\gamma_D$
by \cite[Corollary 2]{NPZ}. Then Theorem 1 and the
estimate $$s_D^{n+1}\beta_D\le\sqrt{n+1}\kappa_D$$
(see e.g.~\cite[Theorem 3.1]{DF}) imply \cite[Theorem 12]{NPZ}:
\smallskip

{\it $\beta_D$ is comparable with $\gamma_D$ and $\kappa_D$ on any non-degenerate
$\C$-convex domain $D$ in $\C^n$ up to multiplicative constants depending only on $n.$}
\smallskip

Our second result is about the boundary behavior of the squeezing function
near a smooth boundary point of a plane domain.

By \cite[Theorem 5.3]{DGZ1}, resp. \cite[Theorem 1.3]{DGZ2}, if $D$ is a $\mathcal C^\infty$-smooth
bounded domain in $\C,$ resp. a strictly pseudoconvex domain in $\C^n, $ then
$$\lim_{z\to\partial D}s_D(z)=1.$$
Conversely, by \cite[Theorem 1.2]{Zim}, if the last holds for a smooth non-degenerate convex domain
$D$ in $\C^n,$ then $D$ is strictly pseudoconvex.

To refine \cite[Theorem 5.3]{DGZ1}, recall that a $\mathcal C^1$-smooth bounded domain $D$ in $\C^n$
is said to be Dini-smooth if the inner unit normal vector $n$ to $\partial D$ is Dini-conti\-nuous.
This means that $\ds\int_0^1\frac{\omega(t)}{t}dt<\infty,$ where
$\omega(t)=\sup\{|n_x-n_y|:|x-y|<t,\ x,y\in \partial D\}$ is the modulus
of continuity of $n.$

A boundary point $a$ of a domain $D$ in $\C^n$ is called Dini-smooth if
there exists a neighborhood $U$ of $a$ such that $D\cap U$ is a Dini-smooth domain.

It is clear that $\mathcal C^{1+\varepsilon}$-smoothness implies Dini-smoothness.

\begin{proposition} Let $D$ be a plane domain and $a\in\partial D.$

\noindent (a) If $a$ is Dini-smooth, then
$\ds\limsup_{z\to a}\frac{1-s_D(z)}{\delta_D(z)}<\infty.$

\noindent (b) If $a$ is $\mathcal C^1$-smooth, then
$\ds\lim_{z\to a}\frac{1-s_D(z)}{\delta_D(z)^\alpha}=0$ for any $\alpha<1.$

\end{proposition}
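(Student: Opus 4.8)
The plan is to reduce everything to a single explicitly constructed embedding together with one distortion estimate. The guiding observation is that for a \emph{simply connected} plane domain the Riemann map already gives $s_D\equiv1$, so all the content of the proposition lies in how the ``other'' boundary components of $D$ are pushed toward $\partial\D$ as $z\to a$.

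First I would fix a conformal map $g$ of $D$ onto a canonical domain $\Omega\subset\D$ whose outer boundary (the component carrying $a$) is $\partial\D$ and whose remaining boundary $\Sigma:=\partial\Omega\setminus\partial\D$ is a compact subset of $\D$; normalize $g(a)=1$. Since $a$ is Dini-smooth, the arc of $\partial D$ through $a$ is a $\mathcal C^1$ Jordan arc, so no other boundary component accumulates at $a$, and hence (by continuity of $g$ at $a$) $\Sigma$ stays at a fixed positive distance $2\eta$ from $1$. For $z\to a$ put $p=g(z)\to1$ and compose with the disc automorphism $A_p(w)=\frac{w-p}{1-\bar p w}$, obtaining an embedding $f=A_p\circ g$ with $f(z)=0$. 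As $A_p$ fixes $\partial\D$ one has $\partial f(D)=\partial\D\cup A_p(\Sigma)$, so $s_D(z)\ge\dist(0,A_p(\Sigma))=\min_{w\in\Sigma}|A_p(w)|$. The distortion estimate is then immediate from $1-|A_p(w)|^2=\frac{(1-|p|^2)(1-|w|^2)}{|1-\bar pw|^2}$: for $w\in\Sigma$ we have $|1-\bar pw|\ge\eta$ once $p$ is close to $1$, whence $1-|A_p(w)|\le2\eta^{-2}(1-|p|)$ and therefore
$$1-s_D(z)\le\frac{2}{\eta^2}\,\bigl(1-|g(z)|\bigr).$$

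Everything now reduces to comparing $1-|g(z)|$ with $\delta_D(z)$. Let $\zeta\in\partial D$ be a nearest boundary point to $z$; for $z$ near $a$ it lies on the arc through $a$, so $|g(\zeta)|=1$ and $1-|g(z)|\le|g(z)-g(\zeta)|$. For (a), Dini-smoothness lets me invoke the Kellogg--Warschawski theorem: $g$ extends $\mathcal C^1$ up to the arc with non-vanishing derivative, hence is Lipschitz near $a$, giving $1-|g(z)|\le L\,|z-\zeta|=L\,\delta_D(z)$ and the claimed finite $\limsup$. For (b), the weaker $\mathcal C^1$ hypothesis only yields that $g$ is H\"older continuous of every order $\beta<1$ near $a$, so $1-|g(z)|\le C_\beta\,\delta_D(z)^\beta$; choosing $\beta\in(\alpha,1)$ makes $(1-s_D(z))/\delta_D(z)^\alpha=O(\delta_D(z)^{\beta-\alpha})\to0$.

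The main obstacle is precisely this last regularity input. For (a) it is the clean statement ``Dini $\Rightarrow\mathcal C^1$ up to the boundary'' and is routine to quote. The delicate point is (b): $\mathcal C^1$ regularity is exactly the borderline case in which $g'$ need not extend continuously, and one must extract H\"older continuity of every order $<1$ from the mere continuity (not H\"older continuity) of the tangent direction. I would isolate this as a lemma on conformal maps at a $\mathcal C^1$ boundary point and feed it into the otherwise identical geometric argument. A secondary technical issue is the existence of the canonical map $g$ with outer boundary $\partial\D$ for a general, possibly infinitely connected, plane domain; I would settle this by the classical theory of canonical conformal mappings (circular-slit discs), emphasizing that only continuity of $g$ at $a$ and positivity of $\eta$ are actually used.
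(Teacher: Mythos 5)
Your proof is essentially correct and its engine is the same as the paper's: reduce to a conformal copy $G$ of $D$ inside $\D$ whose boundary near the image point $1$ of $a$ is an arc of $\partial\D$, recenter with a M\"obius automorphism, and convert the resulting bound $1-s_D(z)\le C\,(1-|g(z)|)$ into a bound in $\delta_D(z)$ via boundary regularity of the uniformizing map --- Warschawski's Dini-smooth theorem for (a) and Lesley's H\"older theorem for (b), which are exactly the two external inputs the paper cites. Your computation $\dist(0,A_p(\Sigma))\ge 1-2\eta^{-2}(1-|p|)$ and the paper's verification that $f_\zeta$ maps the disc of radius $\rho(\zeta)$ into $\D_r\subset G$ are two formulations of the same estimate. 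The one genuine difference is the reduction step, and it is the one place your version is weaker. You obtain the embedding $g$ as a \emph{global} canonical map onto a circular-slit disc; for a general plane domain (possibly infinitely connected, possibly unbounded) the existence of such a map is not the classical finitely-connected statement, and, more importantly, the Lipschitz/H\"older regularity of $g$ at $a$ that your last step needs is not off-the-shelf: Kellogg--Warschawski and Lesley are theorems about Riemann maps of Jordan domains, so you would have to localize to invoke them anyway. The paper sidesteps all of this by localizing first: choose a neighborhood $U$ of $a$ with $E=U\setminus\overline D$ a Dini-smooth Jordan domain, invert via $\varphi(\zeta)=1/(\zeta-b)$ with $b\in E$ to get a simply connected Dini-smooth domain $F\supset\varphi(D)$, and set $\theta=\psi\circ\varphi$ with $\psi:F\to\D$ the Riemann map; then $G=\theta(D)$ automatically satisfies $\D_r\subset G\subset\D$, the ``holes'' $\Sigma=\partial G\setminus\partial\D$ avoid a neighborhood of $1$, and the regularity theorems apply verbatim to $\psi$. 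If you replace your canonical-domain step by this localization, your distance-to-$A_p(\Sigma)$ argument goes through unchanged and the rest of your write-up is sound.
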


As usual, $\delta_D$ is the distance to $\partial D.$

Proposition 2(a) is inspired by the same inequality in the case of
$\mathcal C^4$-smooth strictly pseudoconvex domains, see \cite[Theorem 1.1]{FW}.

This result is optimal in two directions.

First, the inequality is sharp. Indeed, replacing \cite[Lemma 2.2]{DFW} by
\cite[Theorem 7]{NA} in the proof of \cite[Theorem 2.1]{DFW}, we obtain the following:
\smallskip

{\it If $a$ is a Dini-smooth boundary point of a domain $D$ in $\C^n$ which is not
biholomorphic to the unit ball, then $\ds\liminf_{z\to a}\frac{1-s_D(z)}{\delta_D(z)}>0.$}
\smallskip

Second, \cite[3. An example]{DFW} shows that Proposition 2(a) may fail in the
$\mathcal C^1$-smooth case.
\smallskip

\noindent{\it Proof of Proposition 2.}  (a) One may find a neighborhood $U$
of $a$ such that $E=U\setminus\overline D$ is a Dini-smooth domain.
Let $b\in E,$ $\ds\varphi(\zeta)=\frac{1}{\zeta-b},$ and
$F=\varphi(\C\setminus\overline E)\cup\{0\}.$
Let $\psi:F\to\D$ be a conformal map. The Dini-smoothness implies that $\psi$
extends to a $\mathcal C^1$-diffeomorphism from $\overline F$ to $\overline \D$
(see e.g. \cite[Theorem 3.5]{Pom}). Setting $\theta=\psi\circ\varphi,$ we may
assume that $\theta(a)=1$ and that there exists $r\in(0,2)$ such that
$\D_r\subset G:=\theta(D)\subset\D,$ where $\D_r=\{\zeta\in\D:|\zeta-1|<r.\}.$
Let $\zeta=|\zeta|e^{i\theta}\in\D_r$ such that $1-|\zeta|<r'<r$ and $|e^{i\theta}-1|<r-r'.$ If
$\ds f_\zeta(t)=\frac{t+\zeta}{1+\overline{\zeta}t}$ and
$\ds|t|<\rho(\zeta):=\frac{|\zeta|+r'-1}{1-(1-r')|\zeta|},$ then
$$|f_\zeta(t)-1|<r-r'+|f_\zeta(t)-e^{i\theta}|\le r-r'
+\frac{(1-|\zeta|)(1+|t|)}{1-|\zeta t|}<r.$$
Taking $f_\zeta^{-1}$ as a competitor in the definition of $s_G(\zeta),$ it follows that
$s_G(\zeta)\ge\rho(\zeta).$ This implies that
$$\limsup_{\zeta\to 1}\frac{1-s_G(\zeta)}{\delta_G(\zeta)}\le
\lim_{\zeta\to 1}\frac{1-\rho(\zeta)}{1-|\zeta|}=\frac{2-r'}{r'}.$$
Letting $r'\to r,$ and using that $s_D(z)=s_G(\theta(z))$ and
$$\ds\lim_{z\to a}\frac{\delta_G(\theta(z))}{\delta_D(z)}=|\theta'(a)|$$
it follows that
$$\limsup_{z\to a}\frac{1-s_D(z)}{\delta_D(z)}\le\frac{2-r}{r}|\theta'(a)|.$$

\noindent(b) We may proceed as above, having in mind that now $\psi$ extends
to a homeomorphism from $\overline F$ to $\overline \D$ which is H\"older
continuous with any exponent $\alpha<1$ (see e.g. \cite[Theorem 2]{Les}).
This easily implies that $\ds\lim_{z\to a}\frac{\delta_G(\theta(z))}{\delta_D(z)^\alpha}=0$
for any $\alpha<1.$\qed

\

\noindent\textsl{Acknowledgment.} The authors would like to thank the referee
for finding an essential gap in the original proof of Theorem 1.

\end{document}